\documentclass[reqno]{amsart}

\usepackage{amsmath} 
\usepackage{amsfonts}
\usepackage{amssymb}
\usepackage{amstext}
\usepackage{amsbsy}
\usepackage{amsopn}
\usepackage{amsthm}
\usepackage{amsxtra}
\usepackage{graphicx}
\usepackage{caption}
\usepackage{subcaption}
\usepackage{color}
\usepackage{hyperref}
\usepackage{enumerate}
\usepackage{amscd}

\newtheorem{theorem}{Theorem}[section]
\newtheorem{lemma}[theorem]{Lemma}
\newtheorem{corollary}[theorem]{Corollary}

\newtheorem{conjecture}[theorem]{Conjecture}

\newtheorem*{conjecture*}{Conjecture}
\newtheorem*{claim*}{Claim}
\newtheorem*{theorem*}{Theorem}

\theoremstyle{remark}

\theoremstyle{definition}
\newtheorem{definition}[theorem]{Definition}

\newtheorem{example}[theorem]{Example}

\newcommand{\CA}{\mathcal{A}}

\newcommand{\CL}{\mathcal{L}}

\newcommand{\Z}{\mathbb{Z}}

\newcommand{\N}{\mathbb{N}}

\newcommand{\Id}{{\rm Id}}

\newcommand{\wh}{\widehat}

\newcommand{\rst}[1]{\ensuremath{{\mathbin\upharpoonright}%
\raise-.5ex\hbox{$#1$}}}


\begin{document}

\title{Free ergodic $\Z^2$-systems and complexity}
\author{Van Cyr}
\address{Bucknell University, Lewisburg, PA 17837 USA}
\email{van.cyr@bucknell.edu}
\author{Bryna Kra}
\address{Northwestern University, Evanston, IL 60208 USA}
\email{kra@math.northwestern.edu}

\subjclass[2010]{}
\keywords{}

\thanks{The  second author was partially supported by NSF grant 1500670.}




\begin{abstract} 
Using results relating the complexity of a two dimensional subshift to 
its periodicity, we obtain an application to the well-known conjecture of Furstenberg  
on a Borel probability measure on $[0,1)$ which is invariant under both $x\mapsto px \pmod 1$ and $x\mapsto qx \pmod 1$, showing that any potential counterexample has a nontrivial lower bound on its complexity. 
\end{abstract}

\maketitle

\section{Introduction} 

\subsection{Complexity and periodicity}
For a one dimensional symbolic system $(X, \sigma)$, meaning that $X\subset\CA^\Z$ is a closed set, where $\CA$ is a finite alphabet, 
that is closed under the left shift $\sigma\colon\CA^\Z\to\CA^\Z$, the Morse-Hedlund Theorem 
gives a simple relation between the complexity of the system and periodicity.  Namely, if $P_X(n)$ denotes 
the complexity function, which counts the number of nonempty cylinder sets of length $n$ in $X$, then $(X, \sigma)$ 
is periodic if and only if there exists $n\in\N$ such that $P_X(n)\leq n$.  
Both periodicity and complexity have natural generalizations to higher dimensional systems.  For 
example, for a two dimensional system $(X, \sigma, \tau)$, meaning that $X\subset\CA^{\Z^2}$ is a closed set 
that is invariant under the left and down shifts $\sigma, \tau\colon \CA^{\Z^2}\to\CA^{\Z^2}$, 
the two dimensional complexity $P_X(n,k)$ is the number of nonempty $n$ by $k$ cylinder sets.  
In a partial solution to Nivat's Conjecture~\cite{Niv}, the authors~\cite{CK} showed that  if $(X, \sigma, \tau)$ is a transitive $\Z^2$-subshift 
and there exist $n,k\in\N$ such that $P_{X}(n,k)\leq nk/2$, then there exists $(i,j)\in\Z^2\setminus\{(0,0)\}$ such that $\sigma^i\tau^jx=x$ for all $x\in X$. 
In this note, we give an application of this theorem to Furstenberg's well-known ``$\times p, \times q$ problem.'' 

\subsection{The $\times p, \times q$ problem}
Let $S,T\colon[0,1)\to[0,1)$ denote the maps $Sx:=px \pmod 1$ and $Tx:=qx \pmod 1$, 
where $p,q\geq 1$ are multiplicatively independent integers (meaning that $p$ and $q$ are not both powers of the same integer). 
In the 1960's, Furstenberg~\cite{Fur} proved that  any closed subset of $[0,1)$
that is invariant under both $S$ and $T$ is either all of $[0,1)$ or 
is finite.  He asked whether a similar statement holds for measures: 
\begin{conjecture}[Furstenberg] 
Let $\mu$ be a Borel probability measure on $[0,1)$ that is invariant under both $S$ and $T$
and is ergodic for the joint action of $S$ and $T$.  Then either $\mu$ is Lebesgue measure or $\mu$ is atomic. 
\end{conjecture}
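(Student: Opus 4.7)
The plan is to argue by contradiction. Suppose $\mu$ is a Borel probability measure on $[0,1)$ that is jointly $S,T$-invariant, ergodic for the joint $\Z^2$-action generated by $S$ and $T$, and is neither Lebesgue nor purely atomic. The first move is to invoke Rudolph's entropy dichotomy for the $\times p,\times q$ problem: if $h_\mu(S)>0$ then $\mu$ must be Lebesgue. Since $\mu$ is assumed not to be Lebesgue, this reduces the analysis to the zero-entropy regime $h_\mu(S)=h_\mu(T)=0$, which is where the symbolic-dynamical tools of the introduction become relevant.

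Next I would construct a $\Z^2$-subshift model. Choose a finite Borel partition $\CP$ of $[0,1)$ that is generating for the $\Z^2$-action (which exists by the usual generator theorems in finite entropy) and that refines the natural $p$-adic and $q$-adic partitions, so that the map $\Phi\colon[0,1)\to\CP^{\ZZ}$, defined by $\Phi(x)(m,n)=$ the cell of $\CP$ containing $S^mT^nx$, intertwines the action of $(S,T)$ on $[0,1)$ with the shifts $(\sigma,\tau)$ on $\CP^{\ZZ}$. Let $X$ be the topological support of $\Phi_*\mu$. A key observation is that the $\Z^2$-periodic points of $X$ pull back to points $y\in[0,1)$ with $S^iT^jy=y$ for some $(i,j)\neq (0,0)$, which form a countable subset of $\Q$. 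So if we can prove that $\sigma^i\tau^j$ acts as $\Id$ on all of $X$ for some nontrivial $(i,j)$, then $\mu$ is supported on this countable set and hence is atomic, contradicting our hypothesis.

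To force such a periodic direction I would appeal to the theorem of \cite{CK} cited in the introduction: it suffices to establish, for some $n,k\in\N$, the complexity bound $P_X(n,k)\leq nk/2$, provided the minimal subsystem of $X$ supporting $\Phi_*\mu$ is transitive (which follows from joint ergodicity). Thus the whole proof reduces to a quantitative complexity estimate for the symbolic coding of any non-atomic, zero-entropy, jointly $(S,T)$-invariant measure.

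The main obstacle, and in fact the reason this program does not actually close Furstenberg's conjecture, is exactly this last step. Zero entropy of the $\Z^2$-action gives only subexponential growth of $P_X(n,k)$, and there is no known mechanism that sharpens this to the specific polynomial threshold $nk/2$ required by \cite{CK}. Extracting such a bound from the joint $\times p,\times q$ structure would essentially be equivalent to the conjecture itself, and remains open. What the present note can actually establish along these lines is the contrapositive: any hypothetical counterexample $\mu$ must yield a symbolic coding whose complexity \emph{exceeds} $nk/2$ for every $n$ and $k$, producing the nontrivial lower bound on the complexity of a counterexample advertised in the abstract. A genuine proof of the displayed conjecture would require new input — for example, using the multiplicative independence of $p$ and $q$ to upgrade zero entropy into the Cyr--Kra complexity regime — and this is where I expect the decisive obstruction to lie.
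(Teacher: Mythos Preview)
The statement you were asked to prove is Furstenberg's Conjecture, which is \emph{open}; the paper does not prove it either. You correctly recognize this in your final paragraph, and your diagnosis of the obstruction is accurate: zero entropy yields only subexponential complexity growth, and there is no known way to upgrade this to the polynomial bound $P_X(n,k)\le nk/2$ needed to invoke the Cyr--Kra periodicity theorem. Your observation that the contrapositive yields a nontrivial complexity lower bound on any hypothetical counterexample is exactly the content of the paper's main result (Theorem~\ref{thm:main} and Corollary~\ref{cor:gap}).

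One technical point in your construction deserves comment. You define the coding $\Phi(x)(m,n)$ as the $\CP$-cell containing $S^mT^nx$ for all $(m,n)\in\Z^2$, but $S$ and $T$ are not invertible on $[0,1)$, so $S^mT^nx$ is undefined for negative $m$ or $n$. The paper handles this by first passing to the natural extension $(X,\mathcal{X},\mu_X,S_X,T_X)$ to obtain a genuine $\Z^2$-action, and then applying Rosenthal's version of the Jewett--Krieger theorem to obtain a strictly ergodic subshift model. Your direct-coding approach would need the same preliminary step. Apart from this, your sketch of how periodicity in the symbolic model forces $\mu$ to be atomic matches the paper's argument (the two-case analysis in the proof of Theorem~\ref{th:one-to-all}, depending on the sign of $i\cdot j$), and your overall assessment of what is and is not provable here is correct.
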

 
Progress was made in the 1980's with the work of Lyons~\cite{Lyons}, 
followed soon thereafter by Rudolph's proof that 
positive entropy $h_\mu(\cdot)$ 
of the measure $\mu$ with respect to one of the transformations implies the result for 
relatively prime $p$ and $q$.  This was generalized to multiplicatively independent integers by Johnson: 
\begin{theorem}[Rudolph~\cite{Rud} and Johnson~\cite{johnson}] 
Let $\mu$ be a Borel probability measure on $[0,1)$ that is invariant under both $S$ and $T$ 
and is ergodic for the joint action of $S$ and $T$.  If $h_{\mu}(S)>0$ (or equivalently $h_{\mu}(T)> 0$), 
then $\mu$ is Lebesgue measure. 
\end{theorem}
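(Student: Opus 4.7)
The plan is to follow the symbolic-dynamics framework pioneered by Rudolph. First I would pass to the natural extension to make $S$ invertible and introduce a symbolic model by encoding each $x \in [0,1)$ via a digit sequence in a base compatible with both maps, so that the pair $(S,T)$ becomes essentially a $\Z^2$-action by shifts at differing scales. The multiplicative independence of $p$ and $q$ is equivalent to $\log p$ and $\log q$ being rationally independent, so the set $\{p^n q^m : n,m \in \Z\}$ is dense in $\R_{>0}$; this is the geometric feature I plan to exploit at the end.

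Next, I would use positive entropy to extract structure from conditional measures. Let $\mathcal{P}$ denote the Pinsker $\sigma$-algebra of $(X,\mu,S)$, which is a proper sub-$\sigma$-algebra since $h_\mu(S) > 0$. Because $T$ commutes with $S$ and preserves $\mu$, the $\sigma$-algebra $\mathcal{P}$ is $T$-invariant. Disintegrating $\mu$ over $\mathcal{P}$ yields conditional measures $\{\mu_x\}$ that inherit a natural compatibility with both $S$ and $T$, and the problem reduces to showing that $\mu_x$ is Lebesgue for $\mu$-almost every $x$.

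To complete the argument, I would combine two ingredients. First, an entropy-based argument should show that the conditional measures have no atoms and in fact possess some form of scale-invariance at the scales $p^{-n}$; this is where Rudolph's cylinder-counting arguments (or Host's Fourier-analytic reformulation, using equidistribution of $q^m x \pmod 1$) enter. Second, since $\mu_x$ is simultaneously scale-invariant under $\times p$ and $\times q$ and $\{p^n q^m\}$ is dense in $\R_{>0}$, I would upgrade these discrete invariances to genuine continuous dilation-invariance. A continuous dilation-invariant probability measure on $[0,1)$ must be Lebesgue, hence $\mu_x$ is Lebesgue almost surely, and by joint ergodicity $\mu$ itself is Lebesgue.

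The main obstacle is the first ingredient: converting the entropy hypothesis into a usable geometric statement about the conditional measures. Proving that positive $S$-entropy forces the $T$-conditional measures to have Haar-like structure requires carefully controlling how mass distributes along joint $(S,T)$-cylinders, and the heart of the argument is precisely that the two actions stretch at incommensurable rates. Everything else is relatively soft ergodic theory; the real depth of the Rudolph--Johnson theorem lives in this step, and it is also the place where the hypothesis $h_\mu(S) > 0$ is indispensable, since without it the conditional measures can concentrate on thin sets compatible with both maps, as in Furstenberg's topological examples.
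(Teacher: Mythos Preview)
The paper does not prove this statement. The Rudolph--Johnson Theorem is stated in the introduction as a known result, with citations to \cite{Rud} and \cite{johnson}, and is used only as background and motivation for the paper's own complexity-gap result (Theorem~\ref{thm:main}). The only proof in the paper connected to it is the two-line deduction of the Symbolic Rudolph--Johnson Theorem (Theorem~\ref{Rud-symbolic}), which simply observes that a measure-theoretic isomorphism preserves entropy and then invokes the original theorem as a black box. So there is no proof in the paper to compare your proposal against.

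As a standalone sketch of Rudolph--Johnson, what you wrote is a reasonable high-level narrative, but it blends several distinct proofs. Rudolph's original argument is more directly combinatorial (careful counting of how $T$ moves mass among $S$-cylinders) and does not go through the Pinsker $\sigma$-algebra; the disintegration-and-dilation-invariance picture you describe is closer to Host's Fourier-analytic proof and to the later conditional-measure machinery of Einsiedler--Lindenstrauss. Your identification of the hard step---turning $h_\mu(S)>0$ into a geometric statement about conditional measures---is accurate, but the sketch as written is not a proof so much as an outline of where a proof would go, and you should be aware that the Pinsker-based framing you chose is not how any of the cited proofs actually proceed.
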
 

One way to interpret this theorem is that the set of $\langle S,T\rangle$-ergodic measures experiences an entropy gap with respect to the one-dimensional action generated by $S$ (or equivalently by $T$).  Informally, if $\mu$ has {\em high entropy} (in this 
case meaning that $h_{\mu}(S)>0$), then  
its entropy with respect to $S$ is actually $\log p$ and $\mu$ is Lebesgue measure.  
Our main theorem is that the set of $\langle S,T\rangle$-ergodic measures also experiences a {\em complexity gap}, 
in a sense we make precise.  We show (Theorem~\ref{thm:main}) that if $\mu$ has {\em low complexity} (meaning that 
a certain function grows subquadratically),  then it actually has {\em bounded complexity} (meaning 
that this function is bounded) and $\mu$ is atomic.  
Moreover, all atomic measures have bounded complexity. 

\subsection{Rephrasing $\times p, \times q$ in symbolic terms}
We begin by recasting Furstenberg's Conjecture and the Rudolph-Johnson Theorem as statements about symbolic dynamical systems.  We start by setting some terminology and notation.  

A {\em (measure preserving) system} $(X, \mathcal{X}, \mu, G)$ is a measure space $X$ with an 
associated $\sigma$-algebra $\mathcal X$, probability 
measure $\mu$, and an abelian group $G$ of measurable, measure preserving transformations. 
If the context is clear, we omit the $\sigma$-algebra from the notation, writing $(X, \mu, G)$, and call it a {\em system}.  
The system $(X, \mu, G)$ is {\em free} if the set $\{x\in X\colon gx=x\}$ has measure 
$0$ for every $g\in G$ and the system is {\em ergodic} if the only sets invariant under the 
action of $G$ have either trivial or full measure.  
It follows that if  $(X, \mu, G)$ is an ergodic system with an abelian group $G$ of 
transformations, then the action of $G$ is free if $g_1^{n_1}\circ\ldots\circ g_k^{n_k}\neq\Id$ 
for any $g_1, \ldots, g_k\in G$ and $(n_1, \ldots, n_k)\neq (0, \ldots, 0)$.

Two systems $(X_1,\mathcal{X}_1,\mu_1,G)$ and $(X_2,\mathcal{X}_2,\mu_2,G)$ are
{\em (measure theoretically) isomorphic} if there exist $X_1^{\prime}\in \mathcal{X}_1$ and 
$X_2^{\prime}\in\mathcal{X}_2$ 
with $\mu_1(X_1') = \mu_2(X_2') = 1$ such that $gX_1'\subset X_1'$ for all $g\in G$ 
and $gX_2'\subset X_2'$ for all $g\in G$, 
and there is an invertible bimeasurable transformation 
$\pi\colon X_1^{\prime}\to X_2^{\prime}$ such that $\pi_*\mu_1=\mu_2$ and 
$\pi g(x)=g\pi(x)$ for all $x\in X_1'$, $g\in G$.  

We are particularly interested in the $\Z^2$-system generated by the
two commuting, measure preserving transformations $S$ and $T$.  In this case, we write 
 $(X, \mathcal X, \mu, S, T)$ for the 
$\Z^2$-system.

A {\em (topological) system} $(X, G)$,  is a compact metric space $X$ and a group $G$ 
of homeomorphisms mapping $X$ to itself. If it is clear from the context that we are referring to a 
topological system, we call $(X,G)$ a {\em system}.  A system is said to be {\em minimal} 
if for any $x\in X$, the orbit $\{gx\colon g\in G\}$ is dense in $X$.  
By the Krylov-Bogolyubov Theorem, every system $(X,G)$ admits an invariant Borel probability measure 
and if this measure is unique, we say that $(X,G)$ is {\em uniquely ergodic}. 
A system $(X,G)$ is {\em strictly ergodic} if it is both minimal and uniquely ergodic.

Let $\CA$ denote a finite alphabet and let
 $\CA^{\Z^2}$ be the set of $\CA$-colorings of $\Z^2$.  
 For $x\in\CA^{\Z^2}$ and $\vec u\in\Z^2$, we denote 
 the element of $\CA$ that $x$ assigns to $\vec u$ by $x(\vec u)$.  With respect to the metric 
$$ 
d(x,y):=2^{-\inf\{|\vec u|\colon x(\vec u)\neq y(\vec u)\}}, 
$$ 
$\CA^{\Z^2}$ is compact and the leftward and downward shift maps $\sigma,\tau\colon\CA^{\Z^2}\to\CA^{\Z^2}$ given by 
\begin{eqnarray} 
\label{eq:left}
(\sigma x)(i,j)&:=&x(i+1,j), \\ 
\label{eq:down}
(\tau x)(i,j)&:=&x(i,j+1) 
\end{eqnarray} 
are homeomorphisms.  A closed set $X\subset\CA^{\Z^2}$ which is invariant under the joint action of $\langle\sigma,\tau\rangle$ is called a {\em $\Z^2$-subshift}. 
(The analogous definitions hold for $\Z^d$-subshifts.)

A uniquely ergodic topological system $(\wh X,\nu, G)$ is said to be a 
{\em topological model} for 
the measure preserving system $(X, \mathcal{X}, \mu, G)$ if there exists a measure 
theoretic isomorphism between $(\wh X,\nu,G)$ and $(X, \mu, G)$.  
Again, we are mainly interested in
 topological systems generated by two transformations, and in this case we denote the topological 
 system by $(\wh X, \sigma, \tau)$.  

The Jewett-Krieger Theorem~\cite{jewett,krieger} states that any ergodic $\Z$-system has 
a strictly ergodic topological model, meaning that the system is measure theoretically 
isomorphic to a minimal, uniquely ergodic topological system.  This was generalized to cover ergodic $\Z^d$-systems by 
Weiss~\cite{weiss},  and further refined by Rosenthal (we only state it for $\Z^2$, as this is the only case relevant for our purposes): 
\begin{theorem}[Rosenthal~\cite{rosenthal}]\label{thm:rosenthal} 
Let $(X,\mathcal{X},\mu,S,T)$ be an ergodic, free $\Z^2$-system with entropy less than $\log k$.  
Then there exists a minimal, uniquely ergodic subshift $\wh X\subset\{1,\dots,k\}^{\Z^2}$ such that if $\sigma,\tau\colon\wh X\to\wh X$ denote the horizontal and vertical shifts (respectively) and if $\nu$ is the unique invariant Borel probability on $\wh X$ and $\mathcal{B}$ denotes the Borel $\sigma$-algera, 
then $(\wh X,\mathcal{B},\nu,\sigma,\tau)$ is a topological model for $(X,\mathcal{X},\mu,S,T)$.
\end{theorem}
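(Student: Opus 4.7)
My plan is to derive this refinement of Weiss's theorem by combining his strictly ergodic topological model for ergodic $\Z^d$-actions with a $\Z^2$-version of Krieger's generator theorem, and then upgrading a measurable generator to a clopen one so that the resulting itinerary coding lands in $\{1,\dots,k\}^{\Z^2}$ as required.

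First I would apply Weiss's theorem to obtain a strictly ergodic topological model $(\wh Y,\sigma,\tau)$ for the given system $(X,\mathcal{X},\mu,S,T)$ on some compact metric space, with invariant probability $\nu$ and $h_\nu(\sigma,\tau)<\log k$. Next I would invoke a $\Z^2$-analog of Krieger's generator theorem (available for free ergodic actions of countable amenable groups) to produce a measurable partition $\mathcal{P}=\{P_1,\dots,P_k\}$ whose $\Z^2$-iterates $\bigvee_{(i,j)\in\Z^2}\sigma^{-i}\tau^{-j}\mathcal{P}$ generate the Borel $\sigma$-algebra of $\wh Y$ modulo $\nu$-null sets. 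Coding each point by its $\mathcal{P}$-itinerary, $\pi(y)(i,j):=\ell$ when $\sigma^i\tau^j y\in P_\ell$, yields a shift-equivariant map $\pi\colon\wh Y\to\{1,\dots,k\}^{\Z^2}$ that is injective on a set of full measure.

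The main obstacle is to arrange for $\mathcal{P}$ to consist of clopen sets, since only then is $\pi$ continuous and its image $\wh X:=\pi(\wh Y)$ closed and shift-invariant in $\{1,\dots,k\}^{\Z^2}$. The standard way to achieve this, following the spirit of the Jewett--Krieger construction, is a Rokhlin-tower argument inside the uniquely ergodic model: one approximates each $P_\ell$ by a clopen set along a sufficiently fine $\Z^2$-tower, uses unique ergodicity to control the measure of the symmetric difference uniformly across orbits, and then verifies that the perturbed partition still separates points modulo $\nu$ by retaining the generating property at the measurable level. Executing this in the $\Z^2$-setting requires Rokhlin towers for free $\Z^2$-actions (which exist because the acting group is amenable and the action is free by hypothesis) and is the technical heart of Rosenthal's refinement of Weiss's theorem; this is where I expect the real work to lie.

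Once a clopen $k$-element generator $\mathcal{P}$ is in hand, the remaining verifications are routine. Continuity and equivariance of $\pi$ make $\wh X=\pi(\wh Y)$ a $\Z^2$-subshift in $\{1,\dots,k\}^{\Z^2}$; continuous factors of minimal uniquely ergodic systems are themselves minimal and uniquely ergodic, so $(\wh X,\sigma,\tau)$ inherits both properties from $(\wh Y,\sigma,\tau)$; the pushforward $\nu':=\pi_*\nu$ is the unique invariant Borel probability on $\wh X$; and since $\pi$ is measurably invertible on a set of full measure and intertwines the two $\Z^2$-actions, it realizes the desired measure-theoretic isomorphism between $(\wh X,\mathcal{B},\nu',\sigma,\tau)$ and $(X,\mathcal{X},\mu,S,T)$.
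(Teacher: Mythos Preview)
The paper does not give a proof of this statement; it is quoted as a theorem of Rosenthal, accompanied only by the remark that Rosenthal's paper actually proves the $\{1,\dots,k+1\}^{\Z^2}$ version and merely states the sharp $k$-symbol version. So there is no proof in the paper against which to compare your proposal.

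Regarding your sketch on its own merits, there is a structural gap in the order of operations. You first invoke Weiss's theorem to produce a strictly ergodic model $(\wh Y,\sigma,\tau)$ on an \emph{arbitrary} compact metric space, and only afterwards try to manufacture a clopen $k$-element generator inside $\wh Y$. But Weiss's model need not be zero-dimensional: for an ergodic $\Z^2$-rotation on a torus, for instance, the strictly ergodic model is the torus itself, which is connected and has no nontrivial clopen sets whatsoever. In such a space the step ``approximate each $P_\ell$ by a clopen set along a sufficiently fine tower'' is empty, because tower levels are not clopen either. Rosenthal's argument does not post-process a pre-existing topological model; it builds the strictly ergodic subshift and the finite generator simultaneously, via an iterated Rokhlin-tower and painting construction carried out in the measurable category, controlling unique ergodicity and the alphabet size at the same time. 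Your intuition that this tower construction is the technical heart is correct, but it cannot be decoupled from the construction of the model in the way you outline.
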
 
We note that in~\cite{rosenthal}, the proof given shows that $\wh X\subset\{1,\dots,k+1\}^{\Z^2}$ and the result that the shift alphabet can be taken to have only $k$ letters is stated without proof.  However, the size of the alphabet is not relevant for our purposes, other than the fact that it is a finite number.

The subshift $\wh X\subset\{1,\dots,k\}^{\Z^2}$ in the conclusion of Theorem~\ref{thm:rosenthal}
is not uniquely defined, 
and so we make the following definition: 
\begin{definition} 
Let $(X,\mathcal{X},\mu,S,T)$ be an ergodic $\Z^2$-system.  A minimal, uniquely ergodic $\Z^2$-subshift that is measure theoretically isomorphic to $(X,\mathcal{X},\mu,S,T)$ is called a {\em Jewett-Krieger model for $(X,\mathcal{X},\mu,S,T)$}. 
\end{definition} 

Theorem~\ref{thm:rosenthal} guarantees that any  free ergodic $\Z^2$ system of finite entropy has a Jewett-Krieger model.  However the definition is still valid for non-free, ergodic $\Z^2$ systems; the only difference is that Rosenthal's Theorem no longer guarantees that such a model exists.  For the case of interest to us, we show 
(in the proof of Theorem~\ref{thm:main}) that if $\mu$ is $\langle S,T\rangle$-ergodic, 
then either $\mu$ is atomic or the action of $\langle S,T\rangle$ is free.  This motives us to make 
the following observation: a finite, ergodic $\Z^2$-system cannot be free, but it has a Jewett-Krieger model  in a trivial way, 
obtained by partitioning the system into individual points.  

Using this language, 
we can rephrase Furstenberg's Conjecture and the Rudolph-Johnson Theorem 
as equivalent statements about Jewett-Krieger models.  
Fix the transformations $S,T\colon[0,1)\to[0,1)$ to be 
the maps $Sx:=px \pmod 1$ and $Tx:=qx \pmod 1$, 
where $p,q\geq 1$ are multiplicatively independent integers.   By the natural extension, 
we mean the invertible cover (see Section~\ref{sec:natural-extension}).

\begin{conjecture}[Symbolic Furstenberg Conjecture] 
Let $\mu$ be a Borel probability measure on $[0,1)$  with Borel $\sigma$-algebra $\mathcal B$ 
that is invariant under both $S$ and $T$ and ergodic for the joint action.  If 
$\wh X\subset\{0,1\}^{\Z^2}$ is a Jewett-Krieger model for the natural extension of $([0,1),\mathcal{B},\mu,S,T)$, 
then either $\wh X$ is finite or $\mu$ is Lebesgue measure. 
\end{conjecture}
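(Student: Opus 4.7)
The plan is to pass to the natural extension, build a Jewett-Krieger model, apply the paper's main complexity dichotomy to eliminate the low-complexity regime, and then attempt to feed the resulting complexity lower bound into the Rudolph-Johnson Theorem. First I would dispose of the atomic case: if $\mu$ is atomic, then ergodicity together with the countability of $\{0,1\}$-valued words on finite regions forces the natural extension to be finite, so any Jewett-Krieger model $\wh X$ is automatically finite. Assume henceforth that $\mu$ is non-atomic. The next step is to verify that the $\langle S,T\rangle$-action on the natural extension is free: any nontrivial relation $S^iT^j=\Id$ holding on a positive-measure set would, by ergodicity, hold almost everywhere, and the $\times p, \times q$ structure together with the multiplicative independence of $p$ and $q$ rules this out unless $\mu$ is supported on a finite orbit, contradicting non-atomicity. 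With freeness in hand, Rosenthal's Theorem~\ref{thm:rosenthal} supplies the desired minimal, uniquely ergodic Jewett-Krieger model $\wh X$.

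The second step invokes the main theorem of the paper (Theorem~\ref{thm:main}). Applied to $\wh X$, it delivers a complexity gap: either the relevant two-dimensional complexity function $P_{\wh X}(n,k)$ is bounded, in which case $\wh X$ is finite and $\mu$ is atomic (contradicting our assumption), or the complexity grows at least quadratically in $nk$. Having forced the superquadratic regime, the strategy would be to promote this two-dimensional complexity lower bound to a strictly positive one-dimensional entropy bound $h_\mu(S)>0$, since the Rudolph-Johnson Theorem then identifies $\mu$ as Lebesgue measure and completes the proof.

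The main obstacle is exactly this last promotion, and it is the reason the statement is a conjecture rather than a theorem. The two-dimensional complexity $P_{\wh X}(n,k)$ controls the entropy of the full $\Z^2$-action but not that of the one-dimensional subaction generated by $\sigma$ alone: there exist $\Z^2$-subshifts of positive topological $\Z^2$-entropy whose induced horizontal $\Z$-subshift has zero entropy, and conversely zero-entropy $\Z^2$-subshifts can display arbitrarily rapid polynomial complexity. Closing this gap appears to require exploiting the specific arithmetic rigidity coming from multiplicative independence of $p$ and $q$ inside the symbolic model, rather than any soft combinatorial complexity estimate; absent such input, the framework above yields only a complexity lower bound on potential counterexamples, which is indeed the application announced in the abstract.
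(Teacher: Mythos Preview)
The statement you are addressing is labeled a \emph{conjecture} in the paper, presented as a symbolic reformulation of Furstenberg's original conjecture; the paper offers no proof of it, and indeed its main contribution (Theorem~\ref{thm:main} and Corollary~\ref{cor:gap}) is precisely the partial progress you describe in your second paragraph. Your overall diagnosis of why the argument cannot be completed is correct and matches the paper's own framing: the complexity gap $P_{\wh X}(R_n)\geq n^2/2$ is far too weak to force $h_\nu(\sigma)>0$, which is what the Rudolph--Johnson input would require.

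One correction to your discussion of the obstacle: your claim that there exist $\Z^2$-subshifts of positive topological $\Z^2$-entropy whose induced horizontal $\Z$-subshift has zero entropy is false---positive $\Z^2$-entropy forces \emph{infinite} entropy for every one-dimensional subaction (the paper invokes exactly this fact at the start of Section~2.2 to conclude that the $\Z^2$-entropy here is zero). The relevant point is rather that the bound $P_{\wh X}(R_n)\gtrsim n^2$ is merely polynomial, so the $\Z^2$-entropy vanishes, and zero-entropy $\Z^2$-subshifts with quadratic or faster polynomial complexity can certainly have $h_\nu(\sigma)=0$. The gap between what Theorem~\ref{thm:main} provides and what Rudolph--Johnson requires is therefore even wider than your phrasing suggests.
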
 
\begin{theorem}[Symbolic Rudolph-Johnson Theorem]\label{Rud-symbolic} 
Let $\mu$ be a Borel probability measure on $[0,1)$ with Borel $\sigma$-algebra $\mathcal B$ 
 which is invariant under both $S$ and $T$ and is ergodic for the joint action.  Let $\wh X\subset\{0,1\}^{\Z^2}$ be a Jewett-Krieger model for the natural extension of $([0,1),\mathcal{B},\mu,S,T)$ and let $\sigma,\tau\colon\wh X\to\wh X$ denote the horizontal and vertical shifts (respectively).  If either $h_{\nu}(\sigma)>0$ or $h_{\nu}(\tau)>0$, then $\mu$ is Lebesgue measure. 
\end{theorem}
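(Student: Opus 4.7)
The plan is to reduce this to the classical Rudolph--Johnson theorem (stated earlier in the introduction) by showing that the entropies of the shifts $\sigma,\tau$ on $\wh X$ equal the entropies of $S,T$ on $[0,1)$. The argument is essentially a bookkeeping chase through two standard entropy-preserving constructions: passing to the natural extension, and passing to a measure-theoretically isomorphic model.

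First I would fix notation for the natural extension. Let $(Y,\mathcal{Y},\tilde\mu,\tilde S,\tilde T)$ denote the invertible cover of $([0,1),\mathcal{B},\mu,S,T)$, so that $\tilde S$ and $\tilde T$ are commuting invertible measure-preserving transformations and there is a factor map $\pi\colon Y\to [0,1)$ intertwining $\tilde S$ with $S$ and $\tilde T$ with $T$. A standard computation with the natural extension (the factor map generates the full $\sigma$-algebra under iteration of $\tilde S^{-1}$, and independently under $\tilde T^{-1}$) shows that
\[
h_{\tilde\mu}(\tilde S)=h_\mu(S)\quad\text{and}\quad h_{\tilde\mu}(\tilde T)=h_\mu(T).
\]

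Next I would use the definition of a Jewett--Krieger model: the system $(\wh X,\mathcal{B},\nu,\sigma,\tau)$ is measure theoretically isomorphic, as a $\Z^2$-system, to $(Y,\mathcal{Y},\tilde\mu,\tilde S,\tilde T)$. Since entropy of a single transformation is an isomorphism invariant of the restricted $\Z$-action, this identifies $h_\nu(\sigma)=h_{\tilde\mu}(\tilde S)$ and $h_\nu(\tau)=h_{\tilde\mu}(\tilde T)$. Combining this with the previous step gives $h_\nu(\sigma)=h_\mu(S)$ and $h_\nu(\tau)=h_\mu(T)$.

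Finally, the hypothesis $h_\nu(\sigma)>0$ or $h_\nu(\tau)>0$ translates to $h_\mu(S)>0$ or $h_\mu(T)>0$, and the Rudolph--Johnson Theorem stated in the introduction concludes that $\mu$ is Lebesgue measure. There is no genuine obstacle here: the main point of the statement is that the symbolic rephrasing of the hypothesis is faithful, and the only thing to verify carefully is that the two entropy-preserving passages (to the natural extension and to the topological model) really do preserve the entropies of the generators $S$ and $T$ individually, not merely some joint invariant.
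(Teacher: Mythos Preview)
Your proposal is correct and follows essentially the same route as the paper: restrict the $\Z^2$-isomorphism to a $\Z$-isomorphism for each generator, conclude that $h_\nu(\sigma)=h_\mu(S)$ and $h_\nu(\tau)=h_\mu(T)$, and then invoke Rudolph--Johnson. If anything, you are more careful than the paper, which compresses the natural-extension step into a single line; your explicit verification that entropy is preserved both in passing to the natural extension and in passing to the isomorphic model is exactly the bookkeeping the paper leaves implicit.
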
 
\begin{proof} 
An isomorphism of the $\Z^2$-systems $(X,\mathcal{X},\mu,S,T)$ and $(\wh X,\mathcal{B},\nu,\sigma,\tau)$ restricts to an isomorphism of the $\Z$-systems $(X,\mathcal{X},\mu,S)$ and $(\wh X,\mathcal{B},\nu,\sigma)$, 
and so $h_{\mu}(S)=h_{\nu}(\sigma)$.  Similarly $h_{\mu}(T)=h_{\nu}(\tau)$.  
The statement then follows immediately from the Rudolph-Johnson Theorem. 
\end{proof} 

\subsection{Combinatorial rephrasing of measure theoretic entropy} 
The appeal of Theorem~\ref{Rud-symbolic} is that 
the hypothesis that $h_{\nu}(\sigma)>0$ (or equivalently that $h_{\nu}(\tau)>0$) can be phrased purely as a combinatorial statement about the frequency with which words in the language of $\wh X$ occur in larger words in the language of $\wh X$.  
To explain this, we start with some definitions. 

If $X\subset\CA^\Z$ is a subshift over the finite alphabet $\CA$, we write $x = (x(n)\colon n\in\Z)$.  
A  {\em word} is a defined to be a finite sequence of symbols 
contained consecutively in some $x$ and we let $|w|$ denote the number of symbols in $w$ (it may be finite or infinite).  
A word $w$ is a {\em subword} of a word $u$ if the symbols in the word $w$ occur somewhere in $u$ as consecutive 
symbols.  
The {\em language $\CL = \CL(X)$} of $X$ is defined to be the collection 
of all finite subwords that arise in elements of $X$.
If $w\in\CL(X)$, let $[w]$ denote the {\em cylinder set} it determines, meaning that 
$$
[w] = \{u\in\CL\colon u(n) = w(n) \text{ for } 1\leq n\leq |w|\}.
$$

These definitions naturally generalize to a two dimensional subshift $X\subset\CA^{\Z^2}$, 
and for $x\in\CA^{\Z^2}$ we write $x = (x(\vec u)\colon\vec u\in\Z^2)$.  A {\em word} is a 
finite, two dimensional configuration that is convex and connected (as a subset of $\Z^2$), and a {\em subword}
is a configuration contained in another word. 
If $F\subset\Z^2$ is finite and $\beta\in\CA^F$, then the {\em cylinder set of shape $F$ determined by $\beta$} is 
defined to be the set 
$$ 
[F;\beta]:=\{x\in\CA^{\Z^2}\colon x(\vec u)=\beta(\vec u)\text{ for all }\vec u\in F\}.
$$

\begin{lemma} 
\label{lemma:compute}
Let $(\wh X,\mathcal{B},\nu,\sigma,\tau)$ be a strictly ergodic $\Z^2$-subshift.  Let $w$ be a $(2n+1)\times (2n+1)$ word in the language of $\wh X$ and let $[w]$ denote the cylinder set determined by placing the word $w$ 
centered at $(0,0)$.  
Let  $u_1,u_2,u_3,\dots$ be words in the language of $\wh X$ such that $u_i$ is a square of size
$(2n+2i+1)\times(2n+2i+1)$.  
If $N(w,u_i)$ denotes the number of times $w$ occurs as a subword of $u_i$, then 
$$\nu[w]=\lim_{i\to\infty}N(w,u_i)/(2i+1)^2.$$
\end{lemma}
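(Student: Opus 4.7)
The plan is to reduce the claim to the standard characterization of unique ergodicity for amenable group actions: in a uniquely ergodic system, the ergodic averages of any continuous function converge \emph{uniformly} to its integral against the unique invariant measure. Since cylinder sets in a subshift are clopen, the indicator function $\mathbf{1}_{[w]}$ is continuous, so this criterion is available here.

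First I would fix the Følner sequence $F_i:=\{(a,b)\in\Z^2:-i\le a,b\le i\}$ for $\Z^2$. By unique ergodicity of $(\wh X,\nu,\sigma,\tau)$, the ergodic averages
\[
A_i(x):=\frac{1}{(2i+1)^2}\sum_{(a,b)\in F_i}\mathbf{1}_{[w]}(\sigma^a\tau^b x)
\]
converge to $\nu([w])$ uniformly in $x\in\wh X$ as $i\to\infty$.

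Next I would interpret $A_i$ combinatorially. By construction of $[w]$, one has $\mathbf{1}_{[w]}(\sigma^a\tau^b x)=1$ exactly when the $(2n+1)\times(2n+1)$ sub-block of $x$ centered at $(a,b)$ equals $w$; for $(a,b)\in F_i$ this sub-block lies inside the $(2n+2i+1)\times(2n+2i+1)$ sub-block of $x$ centered at the origin. Since $u_i\in\CL(\wh X)$, I can choose some $x_i\in\wh X$ whose centered $(2n+2i+1)\times(2n+2i+1)$ sub-block is precisely $u_i$. For this choice of $x_i$, the quantity $(2i+1)^2 A_i(x_i)$ counts exactly the occurrences of $w$ as a sub-block of $u_i$, which is $N(w,u_i)$. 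Combining with the uniform convergence above yields $N(w,u_i)/(2i+1)^2\to\nu([w])$, as desired.

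The only nontrivial ingredient is the \emph{uniform} ergodic theorem for uniquely ergodic $\Z^2$-actions, and this is where the main subtlety lies: the points $x_i$ chosen to witness $u_i$ genuinely depend on $i$, so pointwise ergodic convergence at a single $x$ would be insufficient. The uniform version is classical, however: if uniform convergence failed for some continuous $f$, one could extract a sequence of points and Birkhoff-average their point masses along $F_i$ to obtain, by weak-$*$ compactness, a $\Z^2$-invariant Borel probability measure on $\wh X$ different from $\nu$, contradicting unique ergodicity.
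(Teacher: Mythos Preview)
Your argument is correct and is exactly the approach the paper takes: the paper's proof simply invokes uniform convergence of Birkhoff averages of the continuous function $\mathbf{1}_{[w]}$ under unique ergodicity, which is precisely what you spell out in detail. Your explicit remark that uniformity is essential (because the witnessing points $x_i$ vary with $i$) is a useful elaboration of the paper's one-line proof.
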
 
\begin{proof} 
By unique ergodicity, the Birkhoff averages of a continuous function converge uniformly to the integral of the function.  In particular, this applies to the continuous function $1_{[w]}$, so the limit exists and is independent of the sequence $\{u_i\}_{i=1}^{\infty}$. 
\end{proof} 

For $m,n\in\N$, let $\mathcal{P}(m,n)$ be the partition of $\wh X$ according to cylinder sets of shape $[0,m-1]\times[-n+1,n-1]$.  Observe that (recall that $\sigma$, as defined in~\eqref{eq:left}, denotes the left shift)
$$ 
\mathcal{P}(m,n)=\bigvee_{i=0}^k\sigma^{-i}\mathcal{P}(1,n) 
$$ 
and that $\bigvee_{i=-k}^k\sigma^i\mathcal{P}(1,n)$ is the partition of $\wh X$ into symmetric $(2m+1)\times(2n+1)$-cylinders centered at the origin.  Therefore, $\{\mathcal{P}(1,n)\}_{n=1}^{\infty}$ is a sufficient (in the sense of Definition 4.3.11 in~\cite{KH}) family of partitions to 
generate the Borel $\sigma$-algebra of the system $(\wh X,\mathcal{B},\nu,\sigma)$, where we view this as a $\Z$-system 
with respect to the horizontal shift $\sigma$.  
Let $h_\nu(\sigma,\mathcal Q)$ denote the measure theoretic entropy of the system $(\wh X,\mathcal{B}, \nu, \sigma)$ with respect to the partition $\mathcal Q$ and let $h_\nu(\sigma)$ denote the measure theoretic entropy of the system.  
It follows that 
\begin{eqnarray*} 
h_{\nu}(\sigma)&=&\sup_n h_{\nu}(\sigma,\mathcal{P}(1,n)) \\ 
&=&\lim_{n\to\infty}h_{\nu}\left(\sigma,\mathcal{P}(1,n)\right) \\ 
&=&-\lim_{n\to\infty}\lim_{m\to\infty}\frac{1}{m}\sum_{w\in\mathcal{P}(m,n)}\nu[w]\log\nu[w] \\ 
&=&-\lim_{n\to\infty}\lim_{m\to\infty}\lim_{i\to\infty}\frac{1}{m}\sum_{w\in\mathcal{P}(m,n)}\frac{N(w,u_i)}{(2i+1)^2}\cdot\log\frac{N(w,u_i)}{(2i+1)^2}
\end{eqnarray*} 
by Lemma~\ref{lemma:compute}.  
In other words, the Rudolph-Johnson Theorem is equivalent to: 
\begin{theorem}[Combinatorial Rudolph-Johnson Theorem]\label{Rud:comb} 
Let $\mu$ be a Borel probability measure on $[0,1)$ with Borel $\sigma$-algebra $\mathcal B$
and assume that $\mu$ is invariant under both $S$ and $T$, and ergodic for the joint action.  Let $\wh X$ be a Jewett-Krieger model for the natural extension of $([0,1),\mathcal{B},\mu,S,T)$ and without loss suppose the horizontal shift on $\wh X$ is intertwined with $S$ under this isomoprhism.  If  
$$ 
-\lim_{n\to\infty}\lim_{m\to\infty}\lim_{i\to\infty}\frac{1}{m}\sum_{w\in\mathcal{P}(m,n)}\frac{N(w,u_i)}{(2i+1)^2}\cdot\log\frac{N(w,u_i)}{(2i+1)^2}>0, 
$$ 
then the value of this limit is $\log p$ and $\mu$ is Lebesgue measure. 
\end{theorem}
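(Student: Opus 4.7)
The plan is to recognize that the nested-limit expression in the hypothesis is already identified, in the discussion preceding the theorem, with the measure-theoretic entropy $h_{\nu}(\sigma)$, and then to feed this identification into Theorem~\ref{Rud-symbolic}. First I would verify carefully that each equality in the displayed chain
\[
h_{\nu}(\sigma)=\sup_n h_{\nu}(\sigma,\mathcal{P}(1,n))=\cdots=-\lim_{n\to\infty}\lim_{m\to\infty}\lim_{i\to\infty}\frac{1}{m}\sum_{w\in\mathcal{P}(m,n)}\frac{N(w,u_i)}{(2i+1)^2}\log\frac{N(w,u_i)}{(2i+1)^2}
\]
is legitimate: the first equality uses the sufficiency of $\{\mathcal{P}(1,n)\}_{n=1}^{\infty}$ as a generating family for the Borel $\sigma$-algebra under the $\Z$-action of $\sigma$ (this is where the observation about refining $\mathcal{P}(1,n)$ into symmetric cylinders centered at the origin enters); the second equality uses monotonicity of $h_{\nu}(\sigma,\mathcal Q)$ in $\mathcal Q$; the third is the definition of entropy of a partition under a $\Z$-action, applied to the refinements $\mathcal{P}(m,n)=\bigvee_{i=0}^{m-1}\sigma^{-i}\mathcal{P}(1,n)$; and the fourth replaces each $\nu[w]$ by its limiting word-frequency, which is justified by Lemma~\ref{lemma:compute}.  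Once these steps are in place, the quantity on the left side of the hypothesis is exactly $h_{\nu}(\sigma)$.

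Next I would transfer this back to the original system.  Because $\wh X$ is a Jewett-Krieger model for the natural extension of $([0,1),\mathcal B,\mu,S,T)$ and the isomorphism intertwines $\sigma$ with $S$, restricting to $\Z$-subsystems gives a measure-theoretic isomorphism of $(\wh X,\mathcal B,\nu,\sigma)$ with the natural extension of $([0,1),\mathcal B,\mu,S)$.  Both measure-theoretic isomorphism and passage to the natural extension preserve Kolmogorov-Sinai entropy, so $h_{\nu}(\sigma)=h_{\mu}(S)$.  The hypothesis therefore reads $h_{\mu}(S)>0$, and the Rudolph--Johnson Theorem (or equivalently its symbolic form, Theorem~\ref{Rud-symbolic}) forces $\mu$ to be Lebesgue measure.

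It remains to compute the value of the limit when $\mu$ is Lebesgue.  This is a classical Bernoulli-shift calculation: the base-$p$ digit expansion gives a measure-theoretic isomorphism between $([0,1),\mathrm{Leb},S)$ and the one-sided Bernoulli shift on $p$ symbols with the uniform $(1/p,\dots,1/p)$ distribution, which has entropy $\log p$.  Hence $h_{\mu}(S)=\log p$, and by the preceding identification the nested limit equals $\log p$ as claimed.  There is no real obstacle here beyond bookkeeping; the only step requiring any care is the interchange and justification of the iterated limits in the first paragraph, which is why the sufficiency of the partition family $\{\mathcal{P}(1,n)\}$ and the uniform convergence supplied by unique ergodicity in Lemma~\ref{lemma:compute} are both needed.
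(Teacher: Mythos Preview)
Your proposal is correct and follows exactly the approach the paper intends: the paper presents Theorem~\ref{Rud:comb} not with a separate proof but as an immediate restatement (``In other words\ldots'') once the displayed chain of equalities identifies the nested limit with $h_{\nu}(\sigma)$, after which Theorem~\ref{Rud-symbolic} applies. Your write-up simply makes this explicit, and the additional computation that $h_{\mu}(S)=\log p$ for Lebesgue measure via the base-$p$ Bernoulli model is the standard justification for the ``value of this limit is $\log p$'' clause, which the paper leaves unproved.
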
 

\subsection{Complexity of subshifts} 

If $X\subset\CA^{\Z^2}$ is a nonempty subshift, then its {\em complexity function} is the function $P_X\colon\{\text{finite subsets of $\Z^2$}\}\to\N$ given by 
$$ 
P_X(F):=\bigl\vert\{\beta\in\CA^F\colon[F;\beta]\cap X\neq\emptyset\}\bigr\vert. 
$$ 
Let $R_n:=\{(i,j)\in\Z^2\colon1\leq i,j\leq n\}$ denote the $n\times n$ rectangle in $\Z^2$.  A standard notion of the complexity of a subshift $X\subset\CA^{Z^2}$ is the asymptotic growth rate of $P_X(R_n)$.  Observe that $P_X(R_n)$ is  bounded (in $n$) if and only if $X$ is finite.  Moreover, $P_X(R_n)$ grows exponentially if and only if $(X,\sigma,\tau)$ has positive topological entropy. 

We are now in a position to state our main technical result.   
\begin{theorem}\label{thm:main} 
Let $\mu$ be a Borel probability measure on $[0,1)$ with Borel $\sigma$-algebra $\mathcal B$.  Assume 
that $\mu$ is invariant under both $S$ and $T$ and
ergodic for the joint action, and let $\wh X\subset\{0,1\}^{\Z^2}$ be a Jewett-Krieger model for the natural extension of 
$([0,1),\mathcal{B},\mu,S,T)$.  If there exists $n\in\N$ such that $P_{\wh X}(R_n)\leq\frac{1}{2} n^2$, 
then $P_{\wh X}(R_n)$ is bounded (independent of $n$) and $\wh X$ is finite.  In particular, $\mu$ is atomic. 
\end{theorem}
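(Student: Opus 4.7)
The plan is to apply the authors' complexity–periodicity theorem from \cite{CK} to $\wh X$ and then use the multiplicative independence of $p$ and $q$ to convert the resulting non-trivial $\Z^2$-period into an algebraic condition that confines $\mu$ to finitely many rationals. As a first step, note that $\wh X$ is a Jewett-Krieger model and hence minimal (so in particular transitive), and the complexity hypothesis $P_{\wh X}(R_n) \le \tfrac{1}{2}n^2$ is the same as $P_{\wh X}(n,n) \le nk/2$ with $k = n$. The theorem of \cite{CK} therefore applies and delivers $(i,j) \in \Z^2 \setminus \{(0,0)\}$ with
$$
\sigma^i \tau^j x = x \qquad \text{for every } x \in \wh X.
$$

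Next I transport this pointwise period across the measure-theoretic isomorphism between $\wh X$ and the natural extension $(\tilde X, \tilde\mu, \tilde S, \tilde T)$ of $([0,1),\mathcal{B},\mu,S,T)$. Writing $i = a-b$ and $j = c-d$ with $a,b,c,d \ge 0$, the identity $\sigma^i \tau^j = \Id$ on $\wh X$ becomes
$$
\tilde S^a \tilde T^c \;=\; \tilde S^b \tilde T^d \qquad \tilde\mu\text{-a.e.\ on } \tilde X.
$$
Pushing this down via the canonical factor map $\pi\colon \tilde X \to [0,1)$, which intertwines $\tilde S$ with $S$ and $\tilde T$ with $T$, gives
$$
p^a q^c y \;\equiv\; p^b q^d y \pmod 1 \qquad \text{for } \mu\text{-a.e. } y \in [0,1).
$$
Since $(i,j) \ne (0,0)$ and $p,q$ are multiplicatively independent, $p^{i} q^{j} \ne 1$, so $p^a q^c \ne p^b q^d$. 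Consequently the congruence pins $\mu$-almost every $y$ to the finite set of rationals with denominator dividing $|p^a q^c - p^b q^d|$.

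Thus $\mu$ is atomic, and by joint $\langle S,T\rangle$-ergodicity it is the uniform measure on a single (necessarily finite) $\langle S,T\rangle$-orbit of rationals. Any minimal $\Z^2$-subshift measure-theoretically isomorphic to the natural extension of such a finite system must itself be finite: every $\langle\sigma,\tau\rangle$-orbit in $\wh X$ is dense by minimality and finite by isomorphism, hence equals $\wh X$. Each element of a finite $\Z^2$-subshift is jointly $\Z^2$-periodic, so $P_{\wh X}(R_n)$ is bounded independently of $n$, completing the proof.

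The main obstacle is not deep but lies in the bookkeeping of the second step: the integers $i, j$ produced by \cite{CK} can be negative, while $S$ and $T$ on $[0,1)$ are not invertible, so the relation $\sigma^i \tau^j = \Id$ cannot be pushed directly down to $[0,1)$. Working in the natural extension and rewriting the identity with non-negative exponents on each side before projecting is precisely what allows multiplicative independence of $p$ and $q$ to be invoked; once that is done, the remainder is elementary arithmetic in $\R/\Z$.
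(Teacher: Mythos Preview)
Your proof is correct and follows the same route as the paper: apply the complexity--periodicity theorem of \cite{CK} to the (minimal, hence transitive) model $\wh X$, transport the resulting $\Z^2$-period through the natural extension down to $[0,1)$ to obtain an algebraic relation that, by multiplicative independence of $p$ and $q$, has only finitely many solutions, and then conclude $\wh X$ is finite. The only cosmetic differences are that you handle the signs of $(i,j)$ uniformly by writing $i=a-b$, $j=c-d$ with nonnegative exponents on each side (where the paper splits into the two cases $ij\ge 0$ and $ij<0$), and your finiteness argument via minimality plus an atom is a touch more direct than the paper's Poincar\'e-recurrence/ergodic-theorem argument.
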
 

This gives a nontrivial complexity gap 
for the set of $\langle S,T\rangle$-ergodic probability measures, which is our main result: 
\begin{corollary}[Complexity gap]\label{cor:gap} 
Let $\mu$ be a Borel probability measure on $[0,1)$ which is invariant under both $S$ and $T$ and ergodic for the joint action, and let $\wh X\subset\{0,1\}^{\Z^2}$ be a Jewett-Krieger model for the natural extension of $([0,1),\mathcal{B},\mu,S,T)$.  Then either $P_{\wh X}(R_n)$ is bounded (and $\mu$ is atomic) or 
$$ 
\liminf_{n\to\infty}\frac{P_{\wh X}(R_n)}{n^2}\geq\frac{1}{2}. 
$$ 
\end{corollary}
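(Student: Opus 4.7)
The plan is to derive Corollary~\ref{cor:gap} as an immediate dichotomy from Theorem~\ref{thm:main}, splitting on whether the complexity function $P_{\wh X}(R_n)$ ever reaches the threshold $\tfrac{1}{2}n^2$.

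First I would treat the case in which there exists some $n \in \N$ with $P_{\wh X}(R_n) \leq \tfrac{1}{2} n^2$. In this situation Theorem~\ref{thm:main} applies directly and yields both conclusions simultaneously: the sequence $P_{\wh X}(R_n)$ is bounded independently of $n$, the subshift $\wh X$ is finite, and $\mu$ is atomic. This is precisely the first alternative of the corollary.

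In the complementary case no such $n$ exists, so $P_{\wh X}(R_n) > \tfrac{1}{2}n^2$ for every $n \in \N$. Dividing by $n^2$ and taking $\liminf$ as $n \to \infty$ yields
\[
\liminf_{n\to\infty} \frac{P_{\wh X}(R_n)}{n^2} \geq \frac{1}{2},
\]
which is the second alternative of the corollary.

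The main obstacle is essentially nonexistent: all the substantive work, namely the jump from subquadratic complexity at a single scale to bounded complexity and atomicity, has already been absorbed into Theorem~\ref{thm:main}. What remains is purely bookkeeping, namely observing that the negation of the hypothesis of Theorem~\ref{thm:main} is exactly the pointwise inequality $P_{\wh X}(R_n) > \tfrac{1}{2}n^2$ for all $n$, and that this is strong enough to pass to the $\liminf$. The two alternatives are mutually exhaustive by construction.
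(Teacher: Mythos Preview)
Your proposal is correct and matches the paper's approach: the paper states Corollary~\ref{cor:gap} immediately after Theorem~\ref{thm:main} without a separate proof, treating it as the obvious dichotomy you describe. Your two cases---either some $n$ satisfies $P_{\wh X}(R_n)\le \tfrac12 n^2$ and Theorem~\ref{thm:main} fires, or no such $n$ exists and the pointwise bound passes to the $\liminf$---are exactly what is intended.
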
 

\noindent This gap is nontrivial in the following sense: there exist aperiodic, strictly ergodic $\Z^2$-subshifts whose complexity function grows subquadratically.  The statement made by Corollary~\ref{cor:gap} is that any such subshift cannot be a Jewett-Krieger model of any $\langle\times p,\times q\rangle$-ergodic measure on $[0,1)$. 

\begin{example} 
Let $X\subset\{0,1\}^{\Z}$ be a Sturmian shift (see~\cite{MH} for the definition).  Then $X$ is strictly ergodic and $P_X(n)=n+1$ for all $n\in\N$.  Let $Y\subset\{0,1\}^{\Z^2}$ be the subshift whose points are obtained by placing each $x\in X$ along the $x$-axis in $\Z^2$ and then copying vertically (i.e. each point in $Y$ is vertically constant and its restriction to the $x$-axis is an element of $X$).  It follows that $Y$ is strictly ergodic and that $P_Y(R_n)=n+1$ for all $n\in\N$.  Corollary~\ref{cor:gap}  shows that $Y$ is not a Jewett-Krieger model for any $\langle\times p,\times q\rangle$-ergodic measure on $[0,1)$. 
\end{example} 

\subsection{Remarks on complexity growth} 
We conclude our introduction with a few brief remarks on Theorem~\ref{thm:main} and Corollary~\ref{cor:gap}.  We 
show (Lemma~\ref{lemma:finite}) that any Jewett-Krieger model $\wh X$ for an atomic $\langle S,T\rangle$-ergodic measure is a strictly ergodic $\Z^2$-subshift containing only doubly periodic $\Z^2$-colorings, meaning that 
there are only finitely many points in $\wh X$.  
From this, it is easy to deduce 
that $P_{\wh X}(R_n)$ is bounded independently  of $n$ (by the number of points in $\wh X$).  
Moreover, we show that if $\wh X$ is a Jewett-Krieger model for $\mu$ 
and if $\wh X$ contains only doubly periodic $\Z^2$-colorings,  then $\mu$ is atomic. 

A strategy for proving Theorem~\ref{thm:main} is therefore to find a nontrivial growth rate of 
$P_{\wh X}(R_n)$ which implies that $\wh X$ contains only doubly periodic $\Z^2$-colorings.  A simple example of such a rate follows from the classical Morse-Hedlund Theorem~\cite{MH}: if there exists $n\in\N$ such that $P_{\wh X}(R_n)\leq n$, then $\wh X$ contains only doubly periodic $\Z^2$-colorings (see e.g. the proof of Theorem 1.2 in~\cite{PO}).  In fact this bound is sharp: there exist $\Z^2$-colorings that are not doubly periodic and yet
satisfy $P_{\wh X}(R_n)=n+1$ for all $n\in\N$.  
Many other subquadratic growth rates can also be realized by strictly ergodic $\Z^2$-subshifts that do not contain doubly periodic points (see, for example,~\cite{pansiot}).  
Therefore, a weak version of Theorem~\ref{thm:main} that replaces the assumption that there exists $n\in\N$ such that $P_{\wh X}(R_n)\leq\frac{1}{2}\cdot n^2$ with the stronger assumption that there exists $n\in\N$ such that $P_{\wh X}(R_n)\leq n$, follows from the Morse-Hedlund Theorem.  
However, this weak theorem relies on the fact that there are simply no strictly ergodic $\Z^2$-subshifts for which $P_{\wh X}(R_n)$ is unbounded but for which $P_{\wh X}(R_n)\leq n$ (for some $n$).  
The complexity gap provided by this weak theorem is therefore trivial in the sense that 
there are no strictly ergodic $\Z^2$-subshifts whose complexity function lies in this gap.  

On the other hand, there do exist strictly ergodic $\Z^2$-subshifts with unbounded complexity 
and  such that $P_{\wh X}(R_n)<\frac{1}{2}\cdot n^2$.  This is the interest in Theorem~\ref{thm:main} and Corollary~\ref{cor:gap}.  
The content of the theorem is that although such $\Z^2$-systems exist, they can not be 
Jewett-Krieger models of $\langle S,T\rangle$-ergodic measures on $[0,1)$.  
This is analogous to Theorem~\ref{Rud:comb}, 
which says that although there are strictly ergodic $\Z^2$-subshifts that have small but positive entropy, they are not Jewett-Krieger models of $\langle S,T\rangle$-ergodic measures on $[0,1)$.  Moreover, analogous to the hypothesis of Corollary~\ref{cor:gap} which relies on the growth rate of $P_{\wh X}(\cdot)$, the hypothesis of Theorem~\ref{Rud:comb} is a condition on the growth rate of the relative complexity function $N(\cdot,\cdot)$ of Lemma~\ref{lemma:compute}, with respect to the action of the horizontal shift (a similar statement holds for the vertical shift). 

\section{Proof of Theorem~\ref{thm:main}}

Throughout this section, we assume that $p, q\geq 2$ are multiplicatively independent 
integers and that $\mu$ is a Borel probability measure on $[0,1)$ which is invariant under both 
\begin{eqnarray*} 
Sx&:=&px\text{ (mod $1$)}; \\ 
Tx&:=&qx\text{ (mod $1$)} 
\end{eqnarray*} 
and is ergodic with respect to the joint action $\langle S,T\rangle$. 
Let $\mathcal{B}$ denote  the associated Borel $\sigma$-algebra on $[0,1)$
%

\subsection{The natural extension}
\label{sec:natural-extension}
Let $X$ be the natural extension of the $\N^2$-system $([0,1),\mathcal{B},\mu,S,T)$.  Specifically (following~\cite{Pet}), let 
$$ 
X:=\left\{y\in[0,1)^{\Z^2}\colon y(i+1,j)=Sy(i,j)\text{ and }y(i,j+1)=Ty(i,j)\text{ for all }i,j\in\Z\right\}, 
$$ 
and for $(i,j)\in\Z^2$ let $\pi_{(i,j)}\colon X\to[0,1)$ be the map $\pi_{(i,j)}(y)=y(i,j)$.  Define a countably additive measure $\mu_X$ on the $\sigma$-algebra 
$$ 
\bigcup_{i=0}^{\infty}\pi_{(-i,-i)}^{-1}\mathcal{B} 
$$ 
by setting $\mu_X(\pi_{(-i,-i)}^{-1}A):=\mu(A)$.  Let $\mathcal{X}$ be the completion of this $\sigma$-algebra with respect to $\mu_X$.  Let $S_X,T_X\colon X\to X$ be the left shift and the down shift, respectively.  Thus $\pi_{(0,0)}$ defines a measure 
theoretic factor map from $(X,\mathcal{X},\mu_X,S_X,T_X)$ to $([0,1),\mathcal{B},\mu,S,T)$.  Moreover, $\mu_X$ is ergodic if and only if $\mu$ is ergodic.  By construction, $h_{\mu}(S)=h_{\mu_X}(S_X)$, $h_{\mu}(T)=h_{\mu_X}(T_X)$, and $h_{\mu}(\langle S,T\rangle)=h_{\mu_X}(\langle S_X, T_X\rangle)$.  

The advantage of working with $(X,\mathcal{X},\mu_X,S_X,T_X)$ instead of the original system is that the natural extension is an ergodic $\Z^2$-system. 

\subsection{Jewett-Krieger models and periodicity} 

If the two dimensional entropy of a system is positive, 
then the entropy of every one dimensional subsystem is infinite
(for a proof, see, for example,~\cite{Sinai}).
In our setting, since $h_{\mu}(S)\leq h_{\text{top}}(S)=\log(p)$
 (and $h_{\mu}(T)\leq h_{\text{top}}(T)=\log(q)$), 
it follows that the measure theoretic entropy of the joint action generated by 
$\langle S,T\rangle$ on $[0,1)$ with respect to $\mu$ is also zero.  
It follows that the measure theoretic entropy with respect to $\mu_X$ of the joint action on $X$  generated by $\langle S_X,T_X\rangle$ is zero.  
Therefore, by Theorem~\ref{thm:rosenthal}, there exists a strictly ergodic subshift $\wh X\subset\{0,1\}^{\Z^2}$ such that $(X,\mathcal{X},\mu_X,S_X,T_X)$ is measure theoretically isomorphic to $(\wh X,\wh{\mathcal{X}},\nu,\sigma,\tau)$, 
where $\wh{\mathcal{X}}$ is the Borel $\sigma$-algebra on $\wh X$, 
$\sigma,\tau\colon\wh X\to\wh X$ denote the left shift and down shift (respectively), and $\nu$ is the unique $\langle\sigma,\tau\rangle$-invariant Borel probability measure.  Note that the choice of $\wh X$ is not necessarily unique.  

%
%

\begin{lemma}
\label{lemma:finite}  
If $(X,\mathcal{X},\mu_X,S_X,T_X)$ is an atomic system, then 
any Jewett-Krieger model $(\wh X,\wh{\mathcal{X}},\nu,\sigma,\tau)$ for $(X,\mathcal{X},\mu_X,S_X,T_X)$ is finite. 
\end{lemma}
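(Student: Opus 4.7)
The plan is to combine two facts: ergodicity of $\mu_X$ forces any atomic invariant measure to concentrate on a single finite orbit, while minimality of a Jewett-Krieger model forces every orbit to be dense. Together these pin down $\wh X$ as a finite set.

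First I would use the atomic hypothesis on $(X,\mathcal{X},\mu_X,S_X,T_X)$ together with $\langle S_X,T_X\rangle$-ergodicity to conclude that $\mu_X$ is concentrated on a finite set $A=\{a_1,\ldots,a_N\}\subset X$ of atoms of equal mass $1/N$. Since $S_X$ and $T_X$ preserve $\mu_X$, the action permutes atoms of any fixed positive mass, so the set of atoms of the largest atomic mass is $\langle S_X,T_X\rangle$-invariant; ergodicity forces it to have full measure, and the probability condition then forces it to be finite and to consist of a single orbit of equal-mass atoms.

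Next I would transport this structure to $\wh X$ through the measure-theoretic isomorphism $\pi\colon X'\to\wh X'$ underlying the Jewett-Krieger model, with $\mu_X(X')=\nu(\wh X')=1$ and with $X'$, $\wh X'$ invariant under the respective actions. Because atoms carry positive mass and $\mu_X(X\setminus X')=0$, no atom can lie outside $X'$, so $A\subset X'$. Setting $\hat a_i:=\pi(a_i)\in\wh X'$, the mass-preservation of $\pi$ makes each $\hat a_i$ an atom of $\nu$ of mass $1/N$, and the equivariance $\pi\circ g=g\circ\pi$ converts the single $\langle S_X,T_X\rangle$-orbit $A$ into a single $\langle\sigma,\tau\rangle$-orbit $\{\hat a_1,\ldots,\hat a_N\}$ inside $\wh X$.

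Finally I would invoke minimality of $\wh X$: the orbit closure of $\hat a_1$ under $\langle\sigma,\tau\rangle$ must equal all of $\wh X$. But that orbit is the finite set $\{\hat a_1,\ldots,\hat a_N\}$, which is automatically closed in the Hausdorff space $\wh X$. Hence $\wh X$ coincides with this finite orbit and is therefore finite. The only step requiring any care is the transfer of the atomic structure across the isomorphism, but this is routine since positive-mass atoms cannot be destroyed by removing a null set and the equivariance of $\pi$ on $X'$ makes the orbit correspondence immediate; I do not anticipate a substantive obstacle.
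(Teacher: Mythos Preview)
Your argument is correct and is in fact more direct than the paper's. The paper proceeds by transferring a single atom $x\in X$ to an atom $y\in\wh X$, then invokes Poincar\'e recurrence to produce a nontrivial period vector for $y$ and the pointwise ergodic theorem to rule out a one-dimensional period lattice (so that $y$ is doubly periodic); only afterward does it use ergodicity to put $\nu$-a.e.\ point in the finite orbit of $y$. You instead extract the finite-orbit structure already on the $X$ side---an ergodic atomic invariant measure is supported on a single finite orbit of equal-mass atoms---push the whole orbit across the isomorphism at once, and finish by invoking minimality of $\wh X$ explicitly. Your route bypasses Poincar\'e recurrence and the pointwise ergodic theorem entirely; the only trade-off is that you lean on minimality (which is part of the definition of a Jewett-Krieger model) where the paper leans on the ergodic theorem, with minimality used only tacitly at the very end to pass from ``$\nu$-a.e.\ point is doubly periodic'' to ``$\wh X$ is finite.'' Both proofs are short; yours is the more elementary.
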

\begin{proof} 
Let $\pi\colon (\wh X,\wh{\mathcal{X}},\nu,\sigma,\tau)\to (X,\mathcal{X},\mu_X,S_X,T_X)$ be an isomorphism and let $x\in X$ be an atom.  Then there exist full measure sets $\wh X_1\subset\wh X$ and $X_1\subset X$ such that $\pi\colon\wh X_1\to X_1$ is a bijection which interwines the $\Z^2$ actions.  Every atom in $X$ is contained in $X_1$, 
and if $x\in X_1$ is an atom then there exists unique $y\in\wh X_1$ such that $\pi(y)=x$.  It follows that $\nu(\{y\})=\mu_X(\{x\})>0$ and so $y$ is an atom in $\wh X$.  By the Poincar\'e Recurrence Theorem, there exists $(i,j)\in\Z^2\setminus\{(0,0)\}$ such that $S_X^iT_X^jy=y$.  Let $V_y:=\{(i,j)\in\Z^2\setminus\{(0,0)\}\colon S_X^iT_X^jy=y\}$ 
be the (nonempty) set of nontrivial period vectors for $y$.  If $\text{dim}(\text{Span}(V_y))=1$, then 
$$ 
\lim_{N\to\infty}\frac{1}{(2N+1)^2}\sum_{-N\leq i,j\leq N}1_{\{y\}}(S_X^iT_X^jy)=0<\nu(\{y\}), 
$$ 
which contradicts the pointwise ergodic theorem.  Therefore $\text{dim}(\text{Span}(V_y))=2$ and $y\in\CA^{\Z^2}$ is doubly periodic.  Moreover, for $\nu$-a.e. $z\in\wh X$ we have $S_X^iT_X^jz=y$ for some $(i,j)\in\Z^2$ and so $z$ is also doubly periodic (with periods equal to those of $y$).  Thus there are only finitely many points $z\in\wh X$. 
\end{proof}

Since $\wh X$ is minimal, and hence transitive, we can use the following tool for 
studying the dynamics of $(X,\mathcal{X},\mu_X,S_X,T_X)$: 
\begin{theorem}[Cyr \& Kra~\cite{CK}]\label{thm:CK} 
If $(X, \sigma, \tau)$ is a transitive $\Z^2$-subshift 
and there exist $n,k\in\N$ such that $P_{X}(n,k)\leq nk/2$, then there exists $(i,j)\in\Z^2\setminus\{(0,0)\}$ such that $\sigma^i\tau^jx=x$ for all $x\in X$. 
\end{theorem}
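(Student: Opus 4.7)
The plan is to prove the contrapositive: if $(X,\sigma,\tau)$ is a transitive $\Z^2$-subshift admitting no nonzero global period vector, then $P_X(n,k) > nk/2$ for every $n,k \in \N$. The backbone of the argument is a two-dimensional analogue of the Morse-Hedlund theorem, carried out through the theory of nonexpansive subspaces of Boyle and Lind. My proposal is to identify a distinguished \emph{nonexpansive line} $L \subset \R^2$ through the origin, along which configurations of $X$ are forced by their restrictions to thin slabs, and then to split into cases according to whether the slope of $L$ is rational.

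First, I would verify that subquadratic complexity implies the existence of a nonexpansive line. A convenient formulation is that if every line through the origin is expansive, then $X$ is finite, so the conclusion holds trivially; hence one may assume a nonexpansive $L$ exists. The bound $P_X(n,k) \le nk/2$ is a quantitative strengthening of "infinite but low-complexity," and the goal is to combine it with transitivity to force a genuine periodic structure, rather than merely a one-dimensional rigidity.

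Next, I would treat the two cases separately. If $L$ has \emph{rational} slope, pass to the ``strip subshift'' whose alphabet consists of contents of a slab of fixed width parallel to $L$ and whose dynamics is generated by the integer shift along $L$. The hypothesis $P_X(n,k) \le nk/2$ translates, for suitably chosen $n$ and $k$ aligned with this slab, to a one-dimensional complexity bound of the Morse-Hedlund type on the strip subshift; one extracts a period along $L$ and then uses transitivity to promote this strip-period to a global period vector $(i,j) \in \Z^2 \setminus \{(0,0)\}$ valid on all of $X$. If $L$ has \emph{irrational} slope, strips no longer form a subshift, and one must count directly in $\Z^2$: for a large region $\Omega$, compare the number of distinct $n \times k$ rectangles appearing with center in $\Omega$ (bounded by $P_X(n,k) \le nk/2$) against the number of center positions (of order $|\Omega|$). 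Pigeonhole produces many coincident rectangles, and the irrational slope of $L$ is used to propagate these coincidences into period vectors of arbitrarily small magnitude, contradicting the discreteness of $\Z^2$.

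The chief obstacle, and the actual content of the theorem, is the irrational case: ruling out aperiodic configurations whose unique nonexpansive direction has irrational slope is where the coefficient $1/2$ becomes sharp for this style of argument. One has to choose the counting region $\Omega$ and the aspect ratio of the rectangles carefully relative to the slope of $L$, and show that each ``extra'' coincidence produced by the pigeonhole step yields an independent period vector via an extension/propagation lemma. The fact that this balance tips at exactly $nk/2$ (rather than at Nivat's conjectured $nk$) is both the reason the theorem falls short of the full Nivat conjecture and the technical heart of the proof.
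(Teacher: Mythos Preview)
The paper does not prove this theorem at all: Theorem~\ref{thm:CK} is quoted as an established result from the authors' earlier paper~\cite{CK} and is used as a black box in the proof of Theorem~\ref{th:one-to-all}. There is therefore no proof in this paper to compare your proposal against.

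That said, your outline does track the architecture of the original argument in~\cite{CK}: the Boyle--Lind theory of nonexpansive subspaces is the right starting point, and the dichotomy between rational and irrational nonexpansive directions is indeed the organizing case split. Your description of the rational case (reduce to a one-dimensional strip subshift and apply Morse--Hedlund) is accurate in spirit. Your description of the irrational case is vaguer and somewhat misleading: the actual proof in~\cite{CK} does not proceed by a direct pigeonhole count on a large region $\Omega$ to produce ``period vectors of arbitrarily small magnitude.'' Rather, it develops a structure theory of \emph{balanced} and \emph{unbalanced} words relative to the nonexpansive half-plane, and uses a careful extension argument along the boundary of that half-plane to derive periodicity. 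The factor $1/2$ arises from this balancing analysis, not from a straightforward area-versus-pattern-count inequality. So while your high-level plan is pointed in the right direction, the irrational case as you have sketched it would not close; the missing ingredient is the combinatorial machinery of balanced sets and the associated extension lemmas from~\cite{CK}.
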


\begin{lemma}\label{lem:intermediate} 
If there exists $(i,j)\in\Z^2\setminus\{(0,0\}$ such that $\sigma^i\tau^j x  = x$ for every  
$x\in \wh X$, then $S_X^iT_X^j x=x$ for $\mu$-almost every $x\in X$. 
\end{lemma}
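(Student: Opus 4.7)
The plan is to exploit the isomorphism $\pi$ provided by the definition of a Jewett-Krieger model, observing that the hypothesis $\sigma^i\tau^jy=y$ for \emph{every} (not merely almost every) $y\in\wh X$ is a genuinely topological/pointwise condition that, once transported through $\pi$, yields the conclusion on a full measure invariant set.

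More concretely, by hypothesis there is a measure theoretic isomorphism of $\Z^2$-systems $\pi\colon(\wh X,\wh{\mathcal X},\nu,\sigma,\tau)\to(X,\mathcal X,\mu_X,S_X,T_X)$. Unwinding the definition of isomorphism recalled in the introduction, this means there exist $\langle\sigma,\tau\rangle$-invariant and $\langle S_X,T_X\rangle$-invariant sets $\wh X_1\subset\wh X$ and $X_1\subset X$, respectively, with $\nu(\wh X_1)=\mu_X(X_1)=1$, and an invertible bimeasurable map $\pi\colon\wh X_1\to X_1$ satisfying $\pi_*\nu=\mu_X$ together with the intertwining relation
\[
\pi(\sigma^a\tau^by)=S_X^aT_X^b\,\pi(y)\qquad\text{for all }y\in\wh X_1\text{ and all }(a,b)\in\Z^2.
\]
Now fix any $x\in X_1$ and write $x=\pi(y)$ for the unique $y\in\wh X_1$. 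By the hypothesis of the lemma, $\sigma^i\tau^jy=y$, so applying $\pi$ and using the intertwining identity with $(a,b)=(i,j)$ gives
\[
S_X^iT_X^jx=S_X^iT_X^j\pi(y)=\pi(\sigma^i\tau^jy)=\pi(y)=x.
\]
Thus $S_X^iT_X^jx=x$ on the full measure set $X_1$, which is the desired conclusion.

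There is no real obstacle here: the key input is simply that the hypothesis is pointwise on \emph{all} of $\wh X$, so it holds in particular on the invariant full measure subset on which $\pi$ is a genuine (everywhere defined, bijective) conjugacy of $\Z^2$-actions. The only thing to double-check is that one can take $\wh X_1$ to be $\langle\sigma,\tau\rangle$-invariant and $X_1$ to be $\langle S_X,T_X\rangle$-invariant, but this is built into the definition of measure theoretic isomorphism adopted in the paper.
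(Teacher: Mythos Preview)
Your argument is correct and is essentially the same as the paper's: both proofs transport the pointwise identity $\sigma^i\tau^j=\Id$ through the measure-theoretic isomorphism to conclude $S_X^iT_X^j=\Id$ on a set of full $\mu_X$-measure. The paper phrases this by showing the exceptional set $E=\{x\in X_1:S_X^iT_X^jx\neq x\}$ pulls back to the empty set in $\wh X_1$, while you verify the identity directly for each $x\in X_1$; these are the same argument.
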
 

\begin{proof} 
Let $\psi\colon\wh X\to X$ be an isomorphism.  Thus there exist $\wh X_1\subset\wh X$ and $X_1\subset X$ such that $\nu(\wh X_1)=\mu_X(X_1)=1$, $\psi\colon\wh X_1\to X_1$ is a bi-measurable bijection, $\psi_*\nu=\mu_X$, $\psi\circ\sigma=S_X\circ\psi$, and 
$\psi\circ\tau=T_X\circ\psi$.  Let $E=\{x\in X_1\colon S_X^iT_X^jx\neq x\}$.  
Since $\psi^{-1}(E)=\{y\in\wh X_1\colon\sigma^i\tau^jy\neq y\}$, 
it follows that $\mu_X(E)=\nu(\psi^{-1}(E))=0$.  
\end{proof}

\begin{theorem}
\label{th:one-to-all}
If there exist $n,k\in\N$ such that $P_{\wh X}(n,k)\leq nk/2$, then $\mu$ is atomic.  Moreover, if $\wh Y$ is any other Jewett-Krieger model for $([0,1),\mathcal{B},\mu,S,T)$, then $P_{\wh Y}(n,k)$ is bounded independent of $n,k\in\N$. 
\end{theorem}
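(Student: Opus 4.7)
The plan is to chain Theorem~\ref{thm:CK}, Lemma~\ref{lem:intermediate}, and a short arithmetic computation on $[0,1)$, then invoke Lemma~\ref{lemma:finite} for the second statement. Since $\wh X$ is minimal (hence transitive) and satisfies $P_{\wh X}(n,k)\leq nk/2$, Theorem~\ref{thm:CK} produces $(i,j)\in\Z^2\setminus\{(0,0)\}$ with $\sigma^i\tau^j x = x$ for every $x\in\wh X$. Lemma~\ref{lem:intermediate} then promotes this to $S_X^iT_X^jx = x$ for $\mu_X$-a.e. $x$, equivalently $x(a+i,b+j)=x(a,b)$ for all $(a,b)\in\Z^2$.

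Next I would push down to $[0,1)$ through $\pi_{(0,0)}$. For $\mu_X$-a.e.~$x$, writing $y = x(0,0)$ and choosing $N \geq \max(0,-i,-j)$, both $(N,N)$ and $(N+i, N+j)$ lie in $\N^2$, so the defining relations of the natural extension yield
$$
p^{N+i}q^{N+j}\,y \;\equiv\; x(N+i,N+j) = x(N,N) \;\equiv\; p^N q^N\,y \pmod 1.
$$
Hence $p^N q^N (p^i q^j - 1)\,y \in \Z$. Multiplicative independence of $p$ and $q$ forces $p^i q^j \neq 1$ whenever $(i,j)\neq(0,0)$ (otherwise $p^i = q^{-j}$ would put two distinct prime powers in equality), so the coefficient is a nonzero integer and $\mu$-a.e.~$y$ lies in a finite subset of $\Q \cap [0,1)$. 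Thus $\mu$ is atomic.

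For the moreover, once $\mu$ is atomic and $\langle S,T\rangle$-ergodic, it must be the uniform measure on a single finite $\langle S,T\rangle$-orbit $F\subset[0,1)$, on which $S$ and $T$ restrict to bijections (non-injectivity would contradict $S$- and $T$-invariance of the uniform measure). The natural extension is then supported on finitely many configurations built from $F$ via the invertible action of $\langle S|_F, T|_F\rangle$, so $\mu_X$ is itself atomic. Lemma~\ref{lemma:finite} then forces any Jewett-Krieger model $\wh Y$ to be finite. A finite minimal subshift consists of a single $\langle\sigma,\tau\rangle$-orbit of doubly-periodic colorings, so every $n\times k$ pattern in $\wh Y$ is obtained by shifting a single element of $\wh Y$, giving the uniform bound $P_{\wh Y}(n,k)\leq |\wh Y|$.

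The main obstacle I foresee is the sign-juggling in the middle step: the point of inflating $N$ is precisely to avoid having to apply $S$ or $T$ backwards on $[0,1)$ (where they are not invertible) and to make every exponent that appears a non-negative integer, so that the congruence is between honest integers. Once that bookkeeping is in place, the nonvanishing of $p^i q^j - 1$ is the only genuinely arithmetic use of multiplicative independence, and the rest is an orchestration of results already in the paper.
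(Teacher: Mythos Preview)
Your proof is correct and follows the same architecture as the paper's (Theorem~\ref{thm:CK}, then Lemma~\ref{lem:intermediate}, then arithmetic on $[0,1)$, then Lemma~\ref{lemma:finite}). The only difference is cosmetic: in the arithmetic step the paper splits into cases on the sign of $i\cdot j$ (obtaining either $p^iq^jy\equiv y$ or $p^{|i|}y\equiv q^jy\pmod 1$), whereas your shift-by-$N$ device handles both signs at once via the single relation $(p^{N+i}q^{N+j}-p^Nq^N)y\in\Z$, and you also spell out why $\mu_X$ (not just $\mu$) is atomic before invoking Lemma~\ref{lemma:finite}, a detail the paper leaves implicit.
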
 
\begin{proof} 
Combining Theorem~\ref{thm:CK} and Lemma~\ref{lem:intermediate}, there exist $(i,j)\in\Z^2\setminus\{(0,0\}$ such that 
$S_X^iT_X^jx=x$ for $\mu_X$-a.e. $x\in X$.  Therefore $(S_X^iT_X^jx)(0,0)=x(0,0)$ for $\mu_X$-a.e. $x\in X$.  It is immediate that we also have $(S_X^{-i}T_X^{-j}x)(0,0)=x(0,0)$ for $\mu_X$-a.e. $x\in X$.  So there are two cases to consider, depending on the the sign of $i\cdot j$.

\subsubsection*{Case 1} Suppose $i\cdot j\geq0$.  Then, replacing by $-i$ and $-j$ if necessary, 
we can assume that both $i$ and $j$ are nonnegative.  Set
$E:=\{y\in[0,1)\colon S^iT^jy\neq y\}$ and let $y\in E$.  Then if $x\in\pi^{-1}(y)$,  we 
have that $S_X^iT_X^jx\neq x$.  Thus 
$\mu(E)=\mu_X(\pi^{-1}(E))=0$ and so $S^iT^jy=y$ for $\mu$-a.e. $y\in[0,1)$. 

Now observe that $S^iT^jy=y$ is equivalent to the statement that 
$$ 
p^iq^jy=y\text{ (mod $1$)},
$$ 
which only has finitely many solutions in the interval $[0,1)$.  Therefore, $\mu$ is supported on a finite set.  
Since $\mu$ is $\langle S,T\rangle$-invariant, this set must be $S$- and $T$-invariant.  
Therefore there exist $a,b\in\N$ such that $S^a$ and $T^b$ are both equal to the identity 
$\mu$-almost everywhere.

\subsubsection*{Case 2} Suppose $i\cdot j<0$.  Again, replacing by $-i$ and $-j$ if necessary, 
we can assume that $i<0$ and $j>0$.  Now set $E:=\{y\in[0,1)\colon S^{|i|}y\neq T^jy\}$.  Thus if $y\in E$ and $x\in\pi^{-1}(y)$, then $x(-i,j)\neq x(0,0)=y$ as $S^{|i|}(x(-i,j))=x(0,j)=T^j(x(0,0))$ by construction.  Therefore $S^iT^jx\neq x$ and so $\mu(E)=\mu_X(\pi^{-1}(E))=0$.  It follows that $S^{|i|}y=T^jy$ for $\mu$-a.e. $y\in[0,1)$. 

Finally observe that $S^{|i|}y=T^jy$ is equivalent to 
$$ 
p^{|i|}y=q^jy\text{ (mod $1$)}. 
$$ 
As $p$ and $q$ are multiplicatively independent, there are only finitely many solutions in the interval $[0,1)$.  Therefore, again, $\mu$ is supported on a finite set and there exist $a,b\in\N$ such that $S^a$ and $T^b$ are  both equal to the identity 
$\mu$-almost everywhere.

This establishes the first claim of the theorem.   By Lemma~\ref{lemma:finite}, 
any Jewett-Krieger model of an atomic system is finite, and the second statement follows.  
\end{proof}

We use this to complete the proof of Theorem~\ref{thm:main}:
\begin{proof}[Proof of Theorem~\ref{thm:main}]
Let $\mu$ be a Borel probability measure on $[0,1)$ that is $\langle S,T\rangle$-ergodic.  
If this two dimensional action is not free, arguing 
as in the proof of Theorem~\ref{th:one-to-all} that $\mu$ is an atomic measure, we are done.  Thus we can assume that the action is free, and similarly the action for the natural extension is also free. 

Let $(\wh X,\wh{\mathcal{X}},\nu,\sigma,\tau)$ be a Jewett-Krieger model for 
 the natural extension of the system $([0,1),\mathcal{B},\mu,S,T)$.  
 If there is no such model satisfying the 
 additional property that there exist $n,k\in\N$ satisfying $P_{\wh X}(n,k)\leq nk/2$, then the conclusion of the Theorem holds vacuously.  Thus it suffices to assume that there exists a Jewett-Krieger model $(\wh X,\wh{\mathcal{X}},\nu,\sigma,\tau)$ with the property that there exist $n,k\in\N$ satisfying $P_{\wh X}(n,k)\leq nk/2$.   
By Theorem~\ref{th:one-to-all},  $([0,1),\mathcal{B},\mu,S,T)$ is atomic.  
\end{proof}

\section{Higher dimensions}
Theorem~\ref{thm:main} shows that if $\mu$ is any nonatomic $\times p$, $\times q$ ergodic measure then the natural extension of $([0,1),\mathcal{X},\mu,S,T)$ cannot be measurably isomorphic to any $\Z^2$-subshift of whose complexity function satisfies $P_X(n,n)=o(n^2)$.  It is natural to ask whether this result can be generalized to higher dimensions.  In particular, if $p_1,\dots,p_d$ are a multiplicatively independent set of integers and $\mu$ is a nonatomic $\times p_1,\dots,\times p_d$ ergodic measure, we can ask if the natural extension of $(X,\mathcal{X},\mu,\times p_1,\dots,\times p_d)$ could have a topological model whose complexity function is $o(n^d)$. 

The same method used in the two dimensional case suggests a path to proving this result.  If one could show that any free,  strictly ergodic $\Z^d$-subshift whose complexity function is $o(n^d)$ is periodic, then it would follow that no such topological model for $\mu$ exists.  However, the analog of Theorem~\ref{thm:CK} in dimension $d>2$ is false.  Julien Cassaigne~\cite{Cas} has shown that for $d>2$, there exists a minimal $\Z^d$-subshift $X$ whose elements are not periodic in any direction, and is such that for any $\varepsilon>0$ we have $P_X(n,n,\dots,n)=o(n^{2+\varepsilon})$.  On the other hand, the authors have recently shown~\cite{CK2} that the analog of Theorem~\ref{thm:CK} does hold for dimension $d>2$ if a certain expansiveness assumption is imposed on the subshift. 

If $Y\subset\CA^{\Z^d}$ is a subshift, then we say that the $x$-axis in $\Z^d$ is {\em strongly expansive} if whenever $x,y\in X$ have the same restriction to the $x$-axis, we have $x=y$.  In this case, if $X\subset\CA^{\Z}$ is the subshift obtained by restricting elements of $Y$ to the $x$-axis, then there exist homeomorphisms $\tau_1,\dots,\tau_{d-1}\colon X\to X$ which commute pairwise and with the shift 
$\sigma$ and are such that for any $y\in Y$ we have $y(i_1,i_2,\dots,i_d)=\left(\tau_1^{i_1}\tau_2^{i_2}\cdots\tau_{d-1}^{i_{d-1}}\sigma^{i_d}\pi_X(y)\right)(0)$ for all $i_1,\dots,i_d\in\Z^d$, 
where $\pi_X(y)$ denotes the restriction of $y$ to the $x$-axis.  In previous work, we have shown that: 
\begin{theorem}[Cyr \& Kra~\cite{CK2}] 
Let $X\subset\CA^{\Z}$ be a minimal subshift and let $\tau_1,\dots,\tau_{d-1}\colon X\to X$ be homeomorphisms of $X$ that commute with the shift $\sigma$.  If $\langle\sigma,\tau_1,\dots,\tau_{d-1}\rangle\cong\Z^d$, then $\liminf_{n\to\infty}P_X(n)/n^d>0$. 
\end{theorem}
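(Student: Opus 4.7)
The plan is to reduce to the higher-dimensional analog of Theorem~\ref{thm:CK} from \cite{CK2} (alluded to in the paragraph above), which upgrades that result to dimension $d>2$ under a strong expansiveness hypothesis. Specifically, I would package the $\Z^d$-action $\langle\sigma,\tau_1,\ldots,\tau_{d-1}\rangle$ on $X$ as a $\Z^d$-subshift $Y$ whose $d$-dimensional complexity is controlled by $P_X(\cdot)$, and then derive a contradiction with freeness.

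For each $x\in X$, define $y_x\in\CA^{\Z^d}$ by
$$
y_x(i_1,\ldots,i_d):=\bigl(\tau_1^{i_1}\cdots\tau_{d-1}^{i_{d-1}}\sigma^{i_d}x\bigr)(0),
$$
and let $Y:=\{y_x:x\in X\}\subset\CA^{\Z^d}$. Continuity of $\sigma$ and the $\tau_k$'s shows that $x\mapsto y_x$ is a continuous injection, so $Y$ is compact and shift-invariant, hence a $\Z^d$-subshift. A direct check verifies that the shift of $y_x$ by the $k$th coordinate unit vector equals $y_{\tau_kx}$ for $k<d$ and $y_{\sigma x}$ for $k=d$, so $x\mapsto y_x$ intertwines the $\Z^d$-action on $X$ with the shift action on $Y$. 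Consequently $Y$ is minimal (since $\sigma$ alone acts minimally on $X$), the $\Z^d$-shift action on $Y$ is free (by the hypothesis that $\langle\sigma,\tau_1,\ldots,\tau_{d-1}\rangle\cong\Z^d$), and the $i_d$-axis is strongly expansive in $Y$ (since $y_x$ restricted to that axis equals $x$).

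Next, I would bound $P_Y(R_n)$ in terms of $P_X$. By the Curtis--Hedlund--Lyndon theorem, each $\tau_k$ is a sliding block code on $X$ of some finite radius $r_k$, so the composition $\tau_1^{i_1}\cdots\tau_{d-1}^{i_{d-1}}$ has radius at most $\sum_k|i_k|r_k$. Therefore an $n\times\cdots\times n$ pattern of $Y$, which is recorded by the words $(\tau_1^{i_1}\cdots\tau_{d-1}^{i_{d-1}}x)\rst{[0,n-1]}$ for $(i_1,\ldots,i_{d-1})\in[0,n-1]^{d-1}$, depends only on $x$ restricted to a window of length at most $Cn$, where $C:=2\sum_k r_k+1$. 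Hence $P_Y(R_n)\leq P_X(Cn)$, and so an assumption that $\liminf_n P_X(n)/n^d=0$ forces $\liminf_n P_Y(R_n)/n^d=0$ as well, in particular producing some $n$ with $P_Y(R_n)<\tfrac12 n^d$.

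At that point, the strongly-expansive-axis version of Theorem~\ref{thm:CK} from \cite{CK2} yields a nontrivial period vector shared by every element of $Y$, contradicting the freeness of the $\Z^d$-action. The main obstacle is the complexity bound $P_Y(R_n)\leq P_X(Cn)$: one must verify that the radius accumulated by composing $\tau_1^{i_1}\cdots\tau_{d-1}^{i_{d-1}}$ grows only linearly in $\max_k|i_k|$, which is exactly what Curtis--Hedlund--Lyndon supplies, and one must be sure that the expansiveness hypothesis used in \cite{CK2} is precisely the strong expansiveness of a coordinate axis that the construction delivers.
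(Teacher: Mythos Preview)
The paper does not prove this theorem: it is quoted from \cite{CK2} without argument. So there is no ``paper's own proof'' to compare against. That said, your proposal has a structural problem worth flagging.

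Your argument constructs the $\Z^d$-subshift $Y$ from $(X,\sigma,\tau_1,\ldots,\tau_{d-1})$, observes that the $i_d$-axis is strongly expansive, bounds $P_Y(R_n)\le P_X(Cn)$ via Curtis--Hedlund--Lyndon, and then invokes ``the strongly-expansive-axis version of Theorem~\ref{thm:CK} from \cite{CK2}'' to produce a nontrivial period. But look at how the present paper introduces the theorem you are asked to prove: it first asserts that the analog of Theorem~\ref{thm:CK} holds in dimension $d$ under a strong expansiveness hypothesis, then explains that strong expansiveness of an axis is exactly the condition that $Y$ arises from a one-dimensional subshift $X$ equipped with $d-1$ commuting automorphisms, and then states the theorem in that one-dimensional language. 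In other words, the theorem you are proving \emph{is} the strongly-expansive higher-dimensional periodicity result, merely rephrased on the $X$ side of the correspondence $X\leftrightarrow Y$. Invoking that result at the end of your argument is circular.

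The genuinely non-circular content of your write-up is the correspondence $x\mapsto y_x$ and the complexity comparison $P_X(n)\le P_Y(R_n)\le P_X(Cn)$; both are correct and are precisely the dictionary the paper alludes to. What is missing is an independent proof of the periodicity step---that a minimal $\Z^d$-subshift with a strongly expansive axis and $P_Y(R_n)\le n^d/2$ for some $n$ must admit a nontrivial global period. That is the substance of \cite{CK2}, and it does not follow from Theorem~\ref{thm:CK} (which is purely two-dimensional) nor from anything else proved in the present paper.
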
 
\noindent With some additional effort, the same result can be shown if the assumption that $(X,\sigma)$ is minimal (as a $\Z$-system) is relaxed to only require that $(X,\sigma,\tau_1,\dots,\tau_d)$ is minimal (as a $\Z^d$-system).  Thus, the only obstruction to generalizing Theorem~\ref{thm:main} to the higher dimensional setting is the following: 

\begin{conjecture} 
For every nonatomic Borel probability $\mu$ on $[0,1)$ which is ergodic for the joint action of $\times p_1,\dots,\times p_d$, there is a strongly expansive, minimal topological model for $(X,\mathcal{X},\mu,\times p_1,\dots,\times p_d)$. 
\end{conjecture}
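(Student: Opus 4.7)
The plan is to combine Rosenthal's refinement of Jewett--Krieger (Theorem~\ref{thm:rosenthal}) with a Krieger-style generator construction that is adapted to one distinguished axis. Since $\mu$ is nonatomic and $\langle\times p_1,\dots,\times p_d\rangle$-ergodic, one first argues exactly as in the proof of Theorem~\ref{th:one-to-all} that the joint action must be free on a set of full $\mu$-measure. Passing to the natural extension yields a free ergodic $\Z^d$-system $(X,\mathcal{X},\mu_X,S_1,\dots,S_d)$; moreover, because $h_{\mu_X}(S_i)\leq\log p_i<\infty$ for each $i$, and positive $\Z^d$-entropy would force infinite entropy for any one-dimensional subaction, the joint $\Z^d$-action has entropy zero.

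The natural strategy is then to apply Krieger's finite generator theorem to the single transformation $S_1$, producing a finite measurable partition $\mathcal{P}$ of $X$ whose $S_1$-iterates generate $\mathcal{X}$. Using this partition together with the full $\Z^d$-action, define the coding
$$
\Phi(x)(\vec u):=\mathcal{P}\text{-name of }S_1^{u_1}\cdots S_d^{u_d}x
$$
and let $\wh X\subset\{1,\dots,|\mathcal{P}|\}^{\Z^d}$ be the orbit closure. By construction, the restriction of $\Phi(x)$ to the $x$-axis is the $S_1$-$\mathcal{P}$-name of $x$, which recovers $x$ on a set of full measure because $\mathcal{P}$ generates. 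In particular, the map from $\wh X$ to $\{1,\dots,|\mathcal{P}|\}^{\Z}$ given by restriction to the $x$-axis is injective on a set of full pushforward measure. If this almost-sure injectivity could be upgraded to injectivity on all of $\wh X$, and if $\wh X$ could be arranged to be minimal and uniquely ergodic, then the $x$-axis would be strongly expansive and $\wh X$ together with its induced $\Z^{d-1}$ of commuting homeomorphisms would furnish the desired model.

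The main obstacles are two-fold and tightly linked. First, one must prevent the closure $\wh X$ from introducing extraneous points sharing an $x$-axis projection with existing ones; this is the standard gap between a measurable and a topological generator, and simply invoking Krieger's theorem will not suffice. Second, the subshift coming from a raw coding is typically neither minimal nor uniquely ergodic. To address both issues simultaneously, one would want a strengthening of Rosenthal's theorem in which the strictly ergodic topological model is constructed so that the coordinate-zero partition is additionally a topological generator for the $x$-axis subaction. I expect this to be the crux of any proof: the inductive tower construction underlying Jewett--Krieger--Weiss--Rosenthal would have to be modified to track one distinguished one-dimensional direction throughout, refining partitions not merely to separate points of $X$ but to uniformly separate $S_1$-orbits. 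An alternative strategy worth pursuing is to exploit the $p_i$-adic structure of the original system on $[0,1)$ to construct an explicit minimal model---for instance, built on a Cantor set carrying compatible $p_i$-adic odometer factors---in which injectivity of the $x$-axis coding holds by design.
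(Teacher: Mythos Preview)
The statement you are attempting to prove is stated in the paper as a \emph{conjecture}, not a theorem: the paper offers no proof and explicitly identifies it as the remaining obstruction to a higher-dimensional version of Theorem~\ref{thm:main}. There is therefore no argument in the paper to compare your proposal against.

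Your writeup is, appropriately, not a proof but a research outline with self-identified gaps. The obstacles you flag are genuine and are precisely why the statement remains open. Krieger's generator theorem gives only a \emph{measurable} generator for $S_1$, and passing to the orbit closure $\wh X$ can (and typically will) introduce points on which the $x$-axis restriction fails to be injective; nothing in the standard Rosenthal/Weiss tower construction controls this, since those arguments refine partitions along F{\o}lner sets for the full $\Z^d$-action rather than along a single axis. Your proposed fix---running the Jewett--Krieger--Weiss--Rosenthal machinery while simultaneously forcing the coordinate-zero partition to be a \emph{topological} generator for the $S_1$-subaction---is a reasonable line of attack, but it is not clear that the two requirements (strict ergodicity for the $\Z^d$-action and topological generation for the $\Z$-subaction) can be achieved together; in particular, the one-dimensional subaction $(\wh X,\sigma)$ need not itself be minimal or uniquely ergodic, so one cannot simply invoke one-dimensional Jewett--Krieger. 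The alternative $p_i$-adic approach you mention is also plausible but equally speculative. In short, your proposal is an honest sketch of where the difficulty lies, not a proof, and the paper makes no stronger claim.
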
 

\noindent If this conjecture holds, then if follows that any such system is measurably isomorphic to a subshift whose complexity function grows on the order of $n^d$. 



\end{document}